\documentclass{article}
\usepackage{amsfonts}
\usepackage{amsmath}

\setcounter{MaxMatrixCols}{10}

\newtheorem{theorem}{Theorem}

\newtheorem{corollary}[theorem]{Corollary}

\newtheorem{definition}[theorem]{Definition}

\newtheorem{proposition}[theorem]{Proposition}

\newenvironment{proof}[1][Proof]{\noindent\textbf{#1.} }{\ \rule{0.5em}{0.5em}}

\begin{document}

\title{Reflected backward doubly stochastic differential equations with
time delayed generators}
\author{B. Mansouri$^{1}$, I. Salhi$^{2}$, L. Tamer$^{3}$\\
$^{1,3}$University of Biskra, Algeria.\\$^{2}$Faculty of Sciences Tunisia, University Tunis El Manar.}
\maketitle
\renewcommand{\thefootnote}{
\fnsymbol{footnote}} \footnotetext{{\scriptsize E-mail addresses:
$^{1}$mansouri.badreddine@gmail.com, $^{2}$imensalhi@hotmail.fr, $^{3}$tamerlazhar@yahoo.fr}}
\begin{abstract}
We consider a class of reflected backward doubly stochastic differential equations with time delayed generator (in short RBDSDE with time delayed generator), in this case generator at time $t$ can depend on the values of a solution in the past. Under a Lipschitz condition, we ensure the existence and uniqueness of the solution.
\end{abstract}

\section{Introduction\label{s1}}
After the earlier work of Pardoux \& Peng (1990)\cite{PP}, the theory of
backward stochastic differential equations (BSDEs in short) has a
significant headway thanks to the many application areas. Several
authors contributed in weakening the Lipschitz assumption required
on the drift of the equation (see Lepaltier \& San martin (1996)\cite{LEP},
Kobylanski (1997)\cite{KOB}, Mao (1995)\cite{MAO}, Bahlali (2000)\cite{BAH}).

A new kind of backward stochastic differential equations was
introduced by Pardoux \& Peng \cite{pp} (1994),
\begin{equation*}
Y_{t}=\xi
+\int_{t}^{T}f(s,Y_{s},Z_{s})ds+\int_{t}^{T}g(s,Y_{s},Z_{s})dB_{s}-%
\int_{t}^{T}Z_{s}dW_{s},\quad 0\leq t\leq T
\end{equation*}%
with two different directions of stochastic integrals, i.e., the
equation involves both a standard (forward) stochastic integral
$dW_{t}$ and a backward stochastic integral $dB_{t}.$ They have
proved the existence and uniqueness of solutions for backward doubly stochastic differential equations under
uniformly Lipschitz conditions. Shi et al \cite{shi}(2005)
provided a comparison theorem which is very important in studying
viscosity solution of SPDEs with stochastic tools. Bahlali et al \cite{BGM} provided the existence and uniqueness in the case with a superlinear growth generator and a square integrable
 terminal datum.

Bahlali et al \cite{bhmm} (2009) proved the existence and uniqueness
of the solution to the following reflected backward doubly
stochastic differential equations (RBDSDEs in short) with one continuous
barrier and uniformly Lipschitz coefficients:
\begin{equation}
Y_{t}=\xi +\int_{t}^{T}f\left( s,Y_{s},Z_{s}\right)
ds+\int_{t}^{T}g\left( s,Y_{s},Z_{s}\right)
dB_{s}+K_{T}-K_{t}-\int_{t}^{T}Z_{s}dW_{s},\ \ \ \ 0\leq t\leq T
\label{RBDSDE}
\end{equation}
Recently, LuO, Zhang and Li \cite{LUO} introduced the following BDSDE
\begin{equation*}
Y_{t}=\xi
+\int_{t}^{T}f(s,Y_{s},Z_{s})ds+\int_{t}^{T}g(s,Y_{s},Z_{s})dB_{s}-%
\int_{t}^{T}Z_{s}dW_{s},\quad 0\leq t\leq T
\end{equation*}
where the generator $f$ at time $s$ depends arbitrary on the past
values of a solution $(Y_s, Z_s) = (Y(s + u), Z(s + u)), T\leq u\leq0$. This
class of BDSDEs is called as BDSDEs with time delayed generators. In
Luo et al\cite{LUO}, the authors proved the existence and uniqueness
of a solution for the above equation.

In this paper, we establish the existence and uniqueness of the
solutions for RBDSDEs with time delayed generators.   The paper is
organized as follows. In Section 2, we propose some preliminaries
and notations. The section 3 we gives a priori estimates of the solution. Finally the section 4 our main result is stated and proved.
\section{Notations, assumptions and Definitions}

\hspace{0.5cm}Let $\left( \Omega,\mathcal{F},P\right) $ be a complete probability space,
and $T>0$. Let $\left\{ W_{t},0\leq t\leq T\right\} $ and $\left\{
B_{t},0\leq t\leq T\right\} $ be two independent standard Brownian motions
defined on $\left( \Omega,\mathcal{F},P\right) $ with values in $\mathbb{R}$ and $\mathbb{R}$, respectively. For $t\in\left[ 0,T\right] $, we put,
\begin{equation*}
\mathcal{F}_{t}:=\mathcal{F}_{t}^{W}\vee\mathcal{F}_{t,T}^{B}, \quad
\hbox{and} \quad \mathcal{G}_{t}:=\mathcal{F}_{t}^{W}\vee\mathcal{F}_{T}^{B}
\end{equation*}
where $\mathcal{F}_{t}^{W}=\sigma(W_{s};0\leq s\leq t)$ and $\mathcal{F}
_{t,T}^{B}=\sigma(B_{s}-B_{t};t\leq s\leq T),$ completed with $P$-null sets.
It should be noted that $\left( \mathcal{F}_{t}\right) $ is not an
increasing family of sub $\sigma-$fields, and hence it is not a filtration.
However $(\mathcal{G}_{t})$ is a filtration.

\vspace{0.5cm}
Let $M_{T}^{2}\left( 0,T,\mathbb{R}\right) $ denote the set of jointly measurable stochastic processes $\left\{
\varphi_{t};t\in\left[ 0,T\right] \right\} $, which satisfy :

\textbf{(a)} \ $E\int_{0}^{T}\left\vert \varphi_{t}\right\vert
^{2}dt<\infty. $

\textbf{(b) }\ $\varphi_{t}$ is $\mathcal{F}_{t}-$measurable, for any $t\in%
\left[ 0,T\right] .$

\vspace{0.5cm}
We denote by $S_{T}^{2}\left( \left[ 0,T\right] ,\mathbb{R}\right) ,$ the
set of continuous stochastic processes $\varphi_{t}$, which satisfy :

\textbf{(a')}\ $E\left( \sup_{0\leq t\leq T}\left\vert
\varphi_{t}\right\vert ^{2}\right) <\infty.$

\textbf{(b')} For every $t\in \left[ 0,T\right] ,\ \varphi _{t}$ is $%
\mathcal{F}_{t}-$measurable.\newline

We denote by $L_{-T}^{2}(\mathbb{R})$ the set of measurable function $\varphi:[-T,0]\rightarrow\mathbb{R}$, which satisfy : $\int_{-T}^{0}|\varphi(t)|^{2}dt<\infty.$

\vspace{0.5cm}
We denote by $S_{-T}^{\infty}(\mathbb{R})$ the set of measurable function $\varphi(t):[-T,0]\rightarrow\mathbb{R}$, which satisfy : $sup_{t\in[-T,0]}|\varphi(t)|^{2}dt<\infty.$

\vspace{0.5cm}
We denote by $L_{T}^{2}(\mathbb{R})$ the set of $\mathfrak{F}_T$-measurable random $\eta$ $\mathbb{R}$-valued, which satisfy : $E|\eta|^2<\infty$.

\vspace{0.5cm}
The spaces $M_{T}^{2}(\mathbb{R})$ and $S_{T}^{2}(\mathbb{R})$ are respectively endowed with the norms
\begin{align*}
\|\varphi\|_{M_{T}^{2}}^{2}=E\int_0^T e^{\beta t}|\varphi(t)|^{2} dt \ \  \texttt{and} \ \ \|\varphi\|_{S_{T}^{2}}^{2}=E(sup_{t\in[0,T]} e^{\beta t}|\varphi(t)|^{2}).
\end{align*}
where $\beta>0$.

\vspace{0.2cm}
We denote by $\lambda$ the Lebesgue measure on $([-T , 0], \mathbb{B}([-T , 0]))$, where $\mathbb{B}([-T , 0])$ is the Borel sets of $[-T , 0]$.

\vspace{0.5cm}We consider the following assumptions,\newline
\textbf{H1)} Let $\ f:\Omega \times \left[ 0,T\right] \times S_{-T}^{\infty}(\mathbb{R})
\times L_{-T}^{2}(\mathbb{R})\mathbb{
\rightarrow R}$ and $g:\Omega \times \left[ 0,T\right] \times S_{-T}^{\infty}(\mathbb{R})
\times L_{-T}^{2}\mathbb{
\rightarrow R}$ be two measurable functions and such that for every $\left( y,z\right) \in \mathbb{R}\times \mathbb{R}$, $f(.,y,z)$ and, $g(.,y,z)$ belongts $M^{2}(0,T,\mathbb{R})$
\newline
\textbf{H2)} There exist constants $L>0$ and $0<\alpha <1,$ such
that \ for every $\left( t,\omega \right) \in \Omega \times \left[ 0,T\right]
$ and $\left( y,z\right) \in S_{-T}^{\infty}(\mathbb{R})
\times L_{-T}^{2},$ and for a probability measure $\gamma$ on $([-T,0],\mathbb{B}([-T,0]))$
\begin{align*}
&\left\vert f( t,y_{t}^{1},z_{t}^{1})
-f( t,y_{t}^{2},z_{t}^{2}) \right\vert^{2}\\&
\leq L(\int_{-T}^{0} \left\vert y^{1}(t+u)-y^{2}(t+u)\right\vert^{2}\gamma(du) +\int_{-T}^{0}\left\vert z^{1}(t+u)-z^{2}(t+u)\right\vert^{2}\gamma(du) ) \\&
\left\vert g( t,y_{1},z_{1})
-g( t,y_{2},z_{2}) \right\vert
^{2}\leq \\& L\int_{-T}^{0} \left\vert y^{1}(t+u)-y^{2}(t+u)\right\vert^{2}\gamma(du) +\alpha\int_{-T}^{0}\left\vert z^{1}(t+u)-z^{2}(t+u)\right\vert^{2}\gamma(du)
\end{align*}
\textbf{H3)} $E\int_0^T |f(t,0,0)|^2dt<\infty,$ and $E\int_0^T |g(t,0,0)|^2dt<\infty,$\newline
\textbf{H4)} $f(t,.,.)=0$ and $g(t,.,.)=0$ for $t<0.$\newline
\textbf{H5)} Let $\xi $ be a square integrable random variable which is $%
\mathcal{F}_{T}-$mesurable. \newline
\textbf{H6)} The obstacle $\left\{ S_{t},0\leq t\leq T\right\} $, is a
continuous $\mathcal{F}_{t}-$progressively measurable real-valued process
satisfying $E\left( \sup_{0\leq t\leq T} e^{\beta t}\left( S_{t}^{+}\right) ^{2}\right)
<\infty $, for $\beta>0$. \newline
We assume also that $S_{T}\leq \xi \ a.s.$

\vspace{0.4cm}We define now RBDSDE with time delayed generators.
\begin{definition}
A solution of RBDSDE with time delayed generators is a $\left( \mathbb{R}\times \mathbb{
R}\times \mathbb{R}_{+}\right) $ -valued $\mathcal{F}_{t}-$progressively
measurable process $\left( Y_t,Z_t,K_t\right) _{0\leq t\leq T}$ which
satisfies : \newline
i) $(Y,Z,K_{T})\in S^{2}_{T}\times M^{2}_{T}\times L^{2}(\mathbb{R} )$. \newline
ii) $Y_t=\xi+\int_t^T f(s,Y_s,Z_s)ds+K_T-K_t+\int_t^T g(s,Y_s,Z_s)dB_s-\int_t^T Z_sdW_s,\ \ \ 0\leq t\leq T.$ \newline
iii)$Y_t\geq S_t,\ \ 0\leq t\leq T.$\newline
iv) $K_t $ is adapted, continuous and nondecreasing, $K_0=0$ and $
\int_{0}^{T} e^{\beta t}\left( Y_t-S_t\right) dK_t=0,$ for $\beta>0$.
\end{definition}

Note that for $(Y , Z)\in S^{2}(\mathbb{R})\times M^{2}(\mathbb{R})$ the generator $f,\ g$ are well-defined and $P-a.s$. integrable as
\begin{align}
\int_0^T |f(t,Y_t,Z_t)|^2 dt&\leq 2\int_0^T |f(t,0,0)|^2 dt+2L\int_0^T\int_{-T}^0|Y(t+u)|^2\gamma(du)dt\nonumber\\&+2L\int_0^T\int_{-T}^0|Z(t+u)|^2\gamma(du)dt\nonumber\\&
=2\int_0^T |f(t,0,0)|^2 dt+2L\int_{-T}^0\int_{u}^{T+u}|Y(v)|^2dv\gamma(du)
\nonumber\\&+2L\int_{-T}^0\int_{u}^{T+u}|Z(v)|^2dv\gamma(du)dt\nonumber\\&
\leq2\int_0^T |f(t,0,0)|^2 dt+2L(T\sup_{0\leq t\leq T} |Y(t)|^2+\int_0^T |Z(t)|^2 dt) \nonumber\\&<\infty. \label{eq:f}
\end{align}
with the same argument we get
\begin{align}
\int_0^T |g(t,Y_t,Z_t)|^2 dt<\infty. \label{eq:g}
\end{align}

\section{Priori estimates of the solution }
Using standard arguments of RBDSDEs one can prove the following estimates
\begin{proposition}
Let $(Y_t, Z_t, K_t, 0\leq t\leq T )$ be a solution of the RBDSDE with time delayed generator. If the Lipschitz constant $L$ of the generator $f$ and $g$ is small enough, then there exist two positive constants $\beta$ and $\theta$ satisfying that
$$D_1 :=\beta-\theta>0,\ \  D_2 := (1-2\widetilde{\gamma}(\frac{L}{\theta}+\alpha))>0,$$
and a positive constant $C = C(\beta,\theta, \widetilde{\gamma}, L, T, \alpha)$, ($\gamma$ as in H2) and $\widetilde{\gamma}=\int_{-T}^{0}e^{-\beta u}\gamma(du)),$ such that
\begin{align*}
&E(\sup_{0\leq t\leq T}e^{\beta t}|Y_t|^2+\int_0^T e^{\beta t}|Z_t|^2dt+e^{\beta T}|K_T|^2)\\&\leq
CE\bigg(e^{\beta T}|\xi|^2+\int_0^T e^{\beta t}|f(t,0,0)|^2dt+\int_0^T e^{\beta t}|g(t,0,0)|^2dt+\sup_{0\leq t\leq T}e^{\beta t}(S_t^+)^2\bigg).
\end{align*}
\end{proposition}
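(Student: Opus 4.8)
The plan is to apply the doubly-stochastic Itô formula to the semimartingale $e^{\beta t}|Y_t|^2$ and then exploit the structure conditions H2--H6 together with the Skorokhod condition in (iv) to absorb the delayed terms. Writing (ii) in differential form, Itô's formula for BDSDEs yields, for $0\le t\le T$,
$$
e^{\beta t}|Y_t|^2 + \beta\int_t^T e^{\beta s}|Y_s|^2\,ds + \int_t^T e^{\beta s}|Z_s|^2\,ds = e^{\beta T}|\xi|^2 + 2\int_t^T e^{\beta s}Y_s f\,ds + 2\int_t^T e^{\beta s}Y_s\,dK_s + \int_t^T e^{\beta s}|g|^2\,ds + M_t,
$$
where $f=f(s,Y_s,Z_s)$, $g=g(s,Y_s,Z_s)$, and $M_t = 2\int_t^T e^{\beta s}Y_s g\,dB_s - 2\int_t^T e^{\beta s}Y_s Z_s\,dW_s$ collects the two stochastic integrals. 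The sign of the $\int_t^T e^{\beta s}|g|^2$ term is the one dictated by the backward Itô integral against $B$, as in Pardoux--Peng.

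First I would take expectations, using (a),(b) and the integrability established in (\ref{eq:f})--(\ref{eq:g}) to see that $E[M_t]=0$. I would then split the cross term by Young's inequality, $2e^{\beta s}Y_s f \le \theta e^{\beta s}|Y_s|^2 + \tfrac{1}{\theta}e^{\beta s}|f|^2$, and insert the Lipschitz bounds of H2 for $|f|^2$ and $|g|^2$. The delayed integrals are converted to ordinary ones by Fubini and the change of variable $v=s+u$, exactly as in the derivation of (\ref{eq:f}); using H4 to discard the contribution of negative times and recalling that $\gamma$ has total mass one, this produces the factor $\widetilde\gamma=\int_{-T}^0 e^{-\beta u}\gamma(du)$ in front of $E\int_0^T e^{\beta s}|Y_s|^2\,ds$ and $E\int_0^T e^{\beta s}|Z_s|^2\,ds$.

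For the reflection term I would use the Skorokhod condition (iv), $\int_0^T e^{\beta s}(Y_s-S_s)\,dK_s=0$, to replace $\int Y_s\,dK_s$ by $\int S_s\,dK_s \le \int S_s^+\,dK_s$, and then bound $2\int e^{\beta s}S_s^+\,dK_s$ by Young's inequality into an $\varepsilon\,e^{\beta T}|K_T|^2$ piece and a $\tfrac{C}{\varepsilon}\sup_s e^{\beta s}(S_s^+)^2$ piece, the former chosen small enough to be absorbed once the $K_T$-estimate below is in hand. Collecting all terms, the coefficients of $E\int_0^T e^{\beta s}|Z_s|^2\,ds$ and of $E\int_0^T e^{\beta s}|Y_s|^2\,ds$ on the left become $D_2=1-2\widetilde\gamma(\tfrac{L}{\theta}+\alpha)$ and $D_1=\beta-\theta$ (the remaining $L$-weighted delayed $|Y|^2$ contributions being absorbed into the constant once $L$ is small); positivity of $D_1,D_2$ is precisely what lets me move the delayed terms to the left-hand side.

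Finally I would recover the supremum bound by a Burkholder--Davis--Gundy estimate applied to $M$, writing $E\sup_t|\int_t^T e^{\beta s}Y_sZ_s\,dW_s| \le \tfrac12 E\sup_t e^{\beta t}|Y_t|^2 + C\,E\int_0^T e^{\beta s}|Z_s|^2\,ds$ so that the supremum term on the right is absorbed, and treating the $g\,dB$ integral likewise; and I would bound $e^{\beta T}|K_T|^2$ by isolating $K_T$ from (ii) at $t=0$ and invoking the estimates already obtained for $Y$, $Z$, $f$ and $g$. I expect the main obstacle to be the bookkeeping that keeps $D_1,D_2>0$: the delayed $Y$- and $Z$-terms reappear on the right through both the generator bounds and the BDG step, so the smallness of $L$ must be used carefully to ensure every reappearance is strictly dominated before the final constant $C=C(\beta,\theta,\widetilde\gamma,L,T,\alpha)$ is read off.
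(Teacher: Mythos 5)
Your outline reproduces the paper's argument step for step: the Itô formula for $e^{\beta t}|Y_t|^2$, Young's inequality with parameter $\theta$ on the cross term, the Fubini/change-of-variable computation that turns the delayed integrals into $\widetilde{\gamma}$-weighted ordinary ones, the Skorokhod condition plus Young's inequality to trade $\int e^{\beta s}Y_s\,dK_s$ for an $\varepsilon e^{\beta T}|K_T|^2$ piece and a $\sup_s e^{\beta s}(S_s^+)^2$ piece, the expression of $K_T$ from the equation at $t=0$, and the Burkholder--Davis--Gundy absorption of the supremum --- with the same smallness-of-$L$ bookkeeping giving $D_1,D_2>0$. This is essentially the paper's proof, so no further comment is needed.
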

\begin{proof}
Applying the Ito formula to $e^{\beta t}|Y_t|^2$ yields that
\begin{align*}
&e^{\beta t}|Y_t|^2+\int_t^T \beta e^{\beta s}|Y_s|^2 ds
\nonumber\\& \leq e^{\beta T}|\xi|^2+\theta\int_t^T e^{\beta
s}|Y_s|^2ds+\frac{1}{\theta}\int_t^T e^{\beta
s}|f(s,Y_s,Z_s)|^2ds+2\int_t^T e^{\beta s}Y_s dK_s \nonumber\\&
+2\int_t^T e^{\beta s}Y_s g(s,Y_s,Z_s)dB_s-2\int_t^T e^{\beta s}Y_s
Z_s dW_s\nonumber\\& +\int_t^T e^{\beta
s}|g(s,Y_s,Z_s)|^2ds-\int_t^T e^{\beta s}|Z_s|^2ds
\end{align*}
by equation (\ref{eq:f}) and (\ref{eq:g}) we get
\begin{align}
&e^{\beta t}|Y_t|^2+(\beta-\theta)\int_t^T  e^{\beta s}|Y_s|^2 ds+\int_t^T e^{\beta s}|Z_s|^2ds \nonumber\\&
\leq e^{\beta T}|\xi|^2+(\frac{2LT\widetilde{\alpha}+2LT\widetilde{\alpha}}{\theta})sup_{0\leq t\leq T}e^{\beta t}|Y_{t}|^2\nonumber\\&+2\int_t^T|g(s,0,0)|^2ds\nonumber
\\&+\frac{2}{\theta}\int_t^T|f(s,0,0)|^2ds
+2\int_t^T e^{\beta s}Y_s dK_s \nonumber\\& +2\int_t^T e^{\beta
s}Y_s g(s,Y_s,Z_s)dB_s-2\int_t^T e^{\beta s}Y_s Z_s dW_s
\label{eq:3}
\end{align}
By a change of integration order, we obtain
\begin{align}
\int_t^T(\int_{-T}^0e^{\beta s}|Z_{s+u}|^2\gamma(du))ds&=\int_t^T(\int_{-T}^0e^{\beta (s+u)}e^{-\beta u}1_{s+u\geq0}|Z_{s+u}|^2\gamma(du))ds\nonumber\\&
=\int_{-T}^0\int_{(t+u)\vee 0}^{T+u}e^{\beta r}e^{-\beta u}1_{r\geq0}|Z_{r}|^2dr\gamma(du)
\nonumber\\&=\int_{0}^T\int_{r-t}^{(r-t)\wedge0}e^{\beta r}e^{-\beta u}1_{r\geq0}|Z_{r}|^2\gamma(du)dr\nonumber\\&
=\int_{0}^Te^{\beta r}|Z_{r}|^2(\int_{-T}^{0}e^{-\beta u}\gamma(du))dr\nonumber\\&
\leq \int_{0}^T \widetilde{\gamma}e^{\beta r}|Z_{r}|^2dr.
\end{align}
return back to (4)
\begin{align}
&e^{\beta t}|Y_t|^2+(\beta-\theta)\int_t^T  e^{\beta s}|Y_s|^2 ds+\int_t^T e^{\beta s}|Z_s|^2ds \nonumber\\&
\leq e^{\beta T}|\xi|^2+\widetilde{\gamma}(\frac{2L}{\theta}+2L)T\sup_{0\leq t\leq T}e^{\beta t}|Y_{t}|^2+\widetilde{\gamma}(\frac{2L}{\theta}+2\alpha)\int_0^Te^{\beta s}|Z_{s}|^2ds
\nonumber\\&+\frac{2}{\theta}\int_t^Te^{\beta s}|f(s,0,0)|^2ds+2\int_t^T e^{\beta s}Y_s dK_s \nonumber\\&
+2\int_t^T e^{\beta s}Y_s g(s,Y_s,Z_s)dB_s-2\int_t^T e^{\beta s}Y_s Z_s dW_s\nonumber\\&
+2\int_t^Te^{\beta s}|g(s,0,0)|^2ds.
\label{eq:3.3}
\end{align}
Firstly, we say that
\begin{align}
E\int_0^Te^{\beta s}|Z_{s}|^2ds\leq C E\int_t^T(e^{\beta s}|f(s,0,0)|^2ds+e^{\beta s}|g(s,0,0)|^2ds+\sup_{0\leq t\leq T}e^{\beta t}|Y_{t}|^2) \label{eq:3.4}
\end{align}
where $C>0$ depends on $\beta, \theta, \widetilde{\gamma}, \alpha, L$ and $T$. The estimate of (\ref{eq:3.4}) can be obtain as follows :
Since $D_1=\beta-\theta>0$ in (\ref{eq:3.3}) putting $t=0$ we  get
\begin{align*}
&|Y_0|^2+(1-\widetilde{\gamma}(\frac{2L}{\theta}+2\alpha))\int_0^T e^{\beta s}|Z_s|^2ds \nonumber\\&
\leq e^{\beta T}|\xi|^2+(\widetilde{\gamma}(\frac{2L}{\theta}+2L)T+\frac{1}{\epsilon})\sup_{0\leq t\leq T}e^{\beta t}|Y_{t}|^2
\nonumber\\&+\frac{2}{\theta}\int_0^Te^{\beta s}|f(s,0,0)|^2ds+\epsilon e^{\beta T}|K_T|^2 \nonumber\\&
+2\int_t^T e^{\beta s}Y_s g(s,Y_s,Z_s)dB_s-2\int_t^T e^{\beta s}Y_s Z_s dW_s\nonumber\\&
+2\int_0µ^Te^{\beta s}|g(s,0,0)|^2ds
\label{eq:3.3}
\end{align*}
where $\epsilon>0$, by (\ref{RBDSDE})
\begin{align*}
K_T=Y_0-\xi-\int_0^Tf(t,Y_t,Z_t)dt-\int_0^Tg(t,Y_t,Z_t)dB_t+\int_0^TZ_tdW_t.
\end{align*}
there exist a constant $\widetilde{C}$ depending on $L, \alpha$ and $T$ such that
\begin{align*}
e^{\beta T}|K_T|^2\leq  \widetilde{C}(|Y_0|^2+|\xi|^2+\int_0^T|f(t,0,0)|^2dt+\int_0^T|g(t,0,0)|^2dt  \\+\sup_{0\leq t\leq T}|Y_t|^2+\int_0^T|Z_t|^2dt+|\int_0^Tg(t,Y_t,Z_t)dB_t|^2+|\int_0^TZ_tdW_t|^2).
\end{align*}
we replace the last inequality in previous
\begin{align*}
&(1-\epsilon \widetilde{C})|Y_0|^2+(1-\widetilde{\gamma}(\frac{2L}{\theta}+2\alpha))\int_0^T e^{\beta s}|Z_s|^2ds+\epsilon \widetilde{C}\int_0^T |Z_s|^2ds \nonumber\\&
\leq (e^{\beta T}+\epsilon \widetilde{C})|\xi|^2+(\widetilde{\gamma}(\frac{2L}{\theta}+2L)T+\frac{1}{\epsilon})\sup_{0\leq t\leq T}e^{\beta t}|Y_{t}|^2+\epsilon \widetilde{C}\sup_{0\leq t\leq T}|Y_t|^2
\nonumber\\&+(\epsilon \widetilde{C}+\frac{2}{\theta})\int_0^Te^{\beta s}|f(s,0,0)|^2ds \nonumber\\&
+2|\int_0^T e^{\beta s}Y_s g(s,Y_s,Z_s)dB_s|+\epsilon \widetilde{C}|\int_0^T g(s,Y_s,Z_s)dB_s|^2\nonumber\\&+2|\int_0^T e^{\beta s}Y_s Z_s dW_s|+\epsilon \widetilde{C}|\int_0^T  Z_s dW_s|^2\nonumber\\&
+(2+\epsilon \widetilde{C})\int_0^Te^{\beta s}|g(s,0,0)|^2ds
\end{align*}
choosing $\epsilon$ small and $\theta$ such that $D_2=(1-\widetilde{\gamma}(\frac{2L}{\theta}+2\alpha))>0$, we obtain
\begin{align*}
&E\int_0^T e^{\beta s}|Z_s|^2ds \leq C_1E[\sup_{0\leq t\leq T}e^{\beta t}|Y_{t}|^2+\int_0^Te^{\beta s}|f(s,0,0)|^2ds+\int_0^Te^{\beta s}|g(s,0,0)|^2ds] \nonumber\\&
+C_1E|\int_0^T e^{\beta s}Y_s g(s,Y_s,Z_s)dB_s|+C_1E|\int_0^T e^{\beta s}Y_s Z_s dW_s|
\end{align*}
where $C_1$ depends on $\beta, \widetilde{\gamma}, \theta, \alpha, L$ and $T$. By the Burkholder Davis Gundy inequality, we have
\begin{align*}
E|\int_0^T e^{\beta s}Y_s Z_s dW_s|\leq &C_2E(\int_0^Te^{\beta s}|Y_s|^2|Z_s|^2ds)^{\frac{1}{2}}\\
&\leq C_2E(\sup_{0\leq t\leq T}e^{\beta t}|Y_t|^2)^{\frac{1}{2}}E(\int_0^Te^{\beta s}|Z_s|^2ds)^{\frac{1}{2}}\\
&\leq \frac{C_2^2}{\lambda}E(\sup_{0\leq t\leq T}e^{\beta t}|Y_t|^2)+\frac{\lambda}{2}E\int_0^Te^{\beta s}|Z_s|^2ds.
\end{align*}
and
\begin{align*}
E|\int_0^T e^{\beta s}Y_s g(s,Y_s,Z_s)dB_s|\leq &C_3E(\int_0^Te^{\beta s}|Y_s|^2|g(s,Y_s,Z_s)|^2ds)^{\frac{1}{2}}\\
&\leq C_3E(\sup_{0\leq t\leq T}e^{\beta t}|Y_t|^2)^{\frac{1}{2}}E(\int_0^Te^{\beta s}|g(s,Y_s,Z_s)|^2ds)^{\frac{1}{2}}\\
&\leq \frac{C_2^2}{\lambda}E(\sup_{0\leq t\leq T}e^{\beta t}|Y_t|^2)+\frac{\lambda}{2}E\int_0^Te^{\beta s}|g(s,Y_s,Z_s)|^2ds\\
&\leq \frac{C_2^2}{\lambda}E(\sup_{0\leq t\leq T}e^{\beta t}|Y_t|^2)+\frac{\lambda}{2}E\int_0^Te^{\beta s}(|g(s,0,0)|^2\\&+L|Y_s|^2+\alpha|Z_s|^2)ds.
\end{align*}
where $\lambda>0$. Finally, we return back to the last inequality, choosing $\lambda$ small and using the Fatou lemma, we obtain that there exist a constant $C>0$ depending on $\beta, \theta, \alpha,  \widetilde{\gamma}, L$ and $T$ such that
\begin{align*}
E\int_0^T e^{\beta s}|Z_s|^2ds \leq CE[\sup_{0\leq t\leq T}e^{\beta t}|Y_{t}|^2+\int_0^Te^{\beta s}|f(s,0,0)|^2ds+\int_0^Te^{\beta s}|g(s,0,0)|^2ds].
\end{align*}

In the second part of proof, we claim that
\begin{align}
E\sup_{0\leq t\leq T} e^{\beta t}|Y_t|^2 &\leq \widehat{C}E\bigg[ e^{\beta T}|\xi|^2+\int_0^Te^{\beta s}|f(s,0,0)|^2ds\nonumber\\&+\int_0^Te^{\beta s}|g(s,0,0)|^2ds+\sup_{0\leq t\leq T} e^{\beta t}(S_t^+)^2
\bigg].
\end{align}
holds for a positive constant $\widehat{C}$ depending on $\beta, \theta, \alpha,  \widetilde{\gamma}, L$ and $T$. To prove (8), going back to (6), using the fact $\int_0^T e^{\beta t}(Y_t-S_t)dK_t=0$, we get
\begin{align}
&e^{\beta t}|Y_t|^2+(\beta-\theta)\int_t^T  e^{\beta s}|Y_s|^2 ds+\int_t^T e^{\beta s}|Z_s|^2ds \nonumber\\&
\leq e^{\beta T}|\xi|^2+\widetilde{\gamma}(\frac{2L}{\theta}+2L)T\sup_{0\leq t\leq T}e^{\beta t}|Y_{t}|^2+\widetilde{\gamma}(\frac{2L}{\theta}+2\alpha)\int_0^Te^{\beta s}|Z_{s}|^2ds
\nonumber\\&+\frac{2}{\theta}\int_t^Te^{\beta s}|f(s,0,0)|^2ds+\frac{1}{\delta}\sup_{0\leq t\leq T}e^{\beta t}(S_t^+)^2+\delta(\int_t^T e^{\frac{\beta s}{2}}dK_s)^2\nonumber\\&
+2\int_t^T e^{\beta s}Y_s g(s,Y_s,Z_s)dB_s-2\int_t^T e^{\beta s}Y_s Z_s dW_s
+2\int_t^Te^{\beta s}|g(s,0,0)|^2ds
\label{eq:3.6}
\end{align}
where $\delta>0$ is a constant. Using \cite{bhmm} and (7), we have
\begin{align}
e^{\beta T}E[K_T-K_t]^2\leq KE[\sup_{0\leq t\leq T}e^{\beta t}|Y_{t}|^2+\int_0^Te^{\beta s}|f(s,0,0)|^2ds+\int_0^Te^{\beta s}|g(s,0,0)|^2ds
\label{eq:3.7}
\end{align}
a constant $K$ depending on $\beta, \theta, \alpha,
\widetilde{\gamma}, L$ and $T$. Now taking expectation in (9) and
taking into account of (\ref{eq:3.7}), we obtain
\begin{align*}
&(1-\widetilde{\gamma}(\frac{2L}{\theta}+2\alpha)E\int_t^T e^{\beta s}|Z_s|^2ds \nonumber\\&
\leq E e^{\beta T}|\xi|^2+\big(\widetilde{\gamma}(\frac{2L}{\theta}+2L)T+\delta K\big)E\big(\sup_{0\leq t\leq T}e^{\beta t}|Y_{t}|^2\big)\nonumber\\&+(\frac{2}{\theta}+\delta K)E\int_t^Te^{\beta s}|f(s,0,0)|^2ds+\frac{1}{\delta}E(\sup_{0\leq t\leq T}e^{\beta t}(S_t^+)^2)\nonumber\\&
+2\delta KE\int_t^Te^{\beta s}|g(s,0,0)|^2ds
\label{eq:3.6}
\end{align*}
thus
\begin{align}
&E\int_t^T e^{\beta s}|Z_s|^2ds
\leq D_{2}^{-1} E e^{\beta T}|\xi|^2+\big(\widetilde{\gamma}(\frac{2L}{\theta}+2L)T+\delta K\big)E\big(\sup_{0\leq t\leq T}e^{\beta t}|Y_{t}|^2\big)\nonumber\\&+D_{2}^{-1}(\frac{2}{\theta}+\delta K)E\int_t^Te^{\beta s}|f(s,0,0)|^2ds+D_{2}^{-1} \frac{1}{\delta}E(\sup_{0\leq t\leq T}e^{\beta t}(S_t^+)^2)\nonumber\\&
+2 D_{2}^{-1}\delta KE\int_t^Te^{\beta s}|g(s,0,0)|^2ds
\end{align}
Next going back to (9) and (10), we obtain
\begin{align}
&E\big[\sup_{0\leq t\leq T} e^{\beta t} |Y_t|^2\big] \nonumber\\&
\leq E e^{\beta
T}|\xi|^2+\big(\widetilde{\gamma}(\frac{2L}{\theta}+2L)T\big)E\big(\sup_{0\leq
t\leq T}e^{\beta
t}|Y_{t}|^2\big)\nonumber\\&+(\frac{2}{\theta})E\int_t^Te^{\beta
s}|f(s,0,0)|^2ds+\frac{1}{\delta}E(\sup_{0\leq t\leq T}e^{\beta
t}(S_t^+)^2)\nonumber\\& +2E\int_t^Te^{\beta
s}|g(s,0,0)|^2ds+(\widetilde{\gamma}(\frac{2L}{\theta}+2\alpha)
E\int_t^T e^{\beta s}|Z_s|^2ds\nonumber\\&+\delta(\int_t^T
e^{\frac{\beta s}{2}}dK_s)^2 +2E\sup_{0\leq t\leq T}\big|\int_t^T
e^{\beta s}Y_s g(s,Y_s,Z_s)dB_s +\int_t^T e^{\beta s}Y_s Z_s dW_s|
\label{eq:3.9}
\end{align}
Using the Burkholder Davis Gundy inequality
, there exist a constant
$C_4>0$ such that
\begin{align*}
&2E\sup_{0\leq t\leq T}\big|\int_t^T e^{\beta s}Y_s Z_s dW_s\big|\\&
\leq C_4E\big[\rho\sup_{0\leq t\leq T}e^{\beta t}|Y_{t}|^2+\frac{1}{\rho}\int_t^T e^{\beta s}|Z_s|^2ds\big]
\end{align*}
and
\begin{align*}
&2E\sup_{0\leq t\leq T}\big|\int_t^T e^{\beta s}Y_s g(s,Y_s,Z_s)dB_s\big|\\&
\leq C_4E\big[\rho\sup_{0\leq t\leq T}e^{\beta t}|Y_{t}|^2+\frac{1}{\rho}\int_t^T e^{\beta s}|g(s,Y_s,Z_s)|^2ds\big]\\&
\leq C_4E\big[\rho\sup_{0\leq t\leq T}e^{\beta t}|Y_{t}|^2+\frac{1}{\rho}\int_t^T e^{\beta s}(|g(s,0,0)|^2+L|Y_s|^2+\alpha|Z_s|^2)ds\big]
\end{align*}

where $\rho>0$. Plugging this inequality in (\ref{eq:3.9}), we get
\begin{align*}
&E\big[\sup_{0\leq t\leq T} e^{\beta t} |Y_t|^2\big] \nonumber\\&\leq
E e^{\beta T}|\xi|^2+\big(\widetilde{\gamma}(\frac{2L}{\theta}+2L)T+C_4(\rho+\frac{LT}{\rho})+C_4\rho+\delta C\big)E\big(\sup_{0\leq t\leq T}e^{\beta t}|Y_{t}|^2\big)\nonumber\\&+(\frac{2}{\theta}+\delta C)E\int_t^Te^{\beta s}|f(s,0,0)|^2ds+\frac{1}{\delta}E(\sup_{0\leq t\leq T}e^{\beta t}(S_t^+)^2)\nonumber\\&
+(2+\frac{C_4}{\rho}+2\delta C)E\int_t^Te^{\beta s}|g(s,0,0)|^2ds+(\widetilde{\gamma}(\frac{2L}{\theta}+2\alpha)+C_4(\frac{1}{\rho}+\alpha))E\int_t^T e^{\beta s}|Z_s|^2ds.
\end{align*}
Finally it is enough to choose $\rho=\frac{1}{2C_4}$ and $\delta$ small to obtain (8). From the above inequalities (\ref{eq:3.4}), (8) and (\ref{eq:3.7}), we obtain that there exists a positive constant $\widehat{C}$ depending on $\beta, \theta, \alpha,  \widetilde{\gamma}, L$ and $T$ such that
\begin{align*}
&E(\sup_{0\leq t\leq T}e^{\beta t}|Y_t|^2+\int_0^T e^{\beta t}|Z_t|^2dt+e^{\beta T}|K_T|^2)\\&\leq
\widehat{C}E(e^{\beta T}|\xi|^2+\int_0^T e^{\beta t}|f(t,0,0)|^2dt+\int_0^T e^{\beta t}|g(t,0,0)|^2dt+\sup_{0\leq t\leq T}e^{\beta t}(S_t^+)^2).
\end{align*}

\end{proof}

\begin{proposition}
Let $(\xi,f,g,S)$ and $(\xi',f',g,S')$ be two triplets satisfying the
above assumptions (H1)-(H5). Suppose that $(Y, Z, K)$ is a solution
of the RBDSDE with time delayed generator $(\xi,f,g,S)$ and
$(Y',Z',K')$ is a solution of the RBDSDE with time delayed generator
$(\xi',f',g,S')$. Define
\begin{align*}
&\Delta\xi=\xi-\xi', \ \ \Delta f = f- f', \ \ \Delta S = S -S';\\ &\Delta Y =
Y-Y',\ \  \Delta Z = Z-Z',\ \ \Delta K = K-K'.
\end{align*} If the Lipschitz constant $L$ of the generator $f$ and $g$ is small
enough, then there exist two positive constants $\beta$ and $\epsilon$
satisfying that
\begin{align*}
D_1 := \beta-\epsilon> 0, D_2 := 1-\widetilde{\gamma}(\frac{2L}{\epsilon}+2\alpha)> 0
\end{align*}
and a positive constant $D$
depending on $\beta,\epsilon ,\widetilde{\gamma}, L, \alpha$ and $T$ such that
\begin{align}
&E(sup_{0\leq t\leq T} e^{\beta t}|\Delta Y_t|^2 + \int^T_0 e^{\beta t}|\Delta Z(t)|^2dt\nonumber\\&
\leq D E\big(e^{\beta t}|\Delta\xi|^2+\int^T_0e^{\beta t}|\Delta f (t, Y_t , Z_t )|^2dt\big)\nonumber\\&
+ D\bigg[ E\big(\sup_{0\leq t\leq T}e^{\beta t}|\Delta S(t)|^2\big)\bigg]^{\frac{1}{2}}\psi^{\frac{1}{2}}_{T} ,\label{eq:3.10}
\end{align}
 where
\begin{align*}
&\psi= E\bigg[e^{\beta T}|\xi|^2 +\int^T_0e^{\beta t}|f (t, 0, 0)|^2dt+\int^T_0e^{\beta t}|g (t, 0, 0)|^2dt +\sup_{0\leq t\leq T}e^{\beta t}(S(t)^+)^2\\&
 + e^{\beta T}|\xi'|^2 +\int^T_0e^{\beta t}|f' (t, 0, 0)|^2dt +\sup_{0\leq t\leq T}e^{\beta t}(S(t)^{'+})^2\bigg].
\end{align*}
\end{proposition}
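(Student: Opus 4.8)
The plan is to subtract the two reflected equations, apply the exponential It\^o formula to $e^{\beta t}|\Delta Y_t|^2$, and then run the same weighted bookkeeping as in the preceding Proposition, the only genuinely new ingredient being the control of the reflection term $\int_t^T e^{\beta s}\Delta Y_s\,d\Delta K_s$.

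First I would write the difference equation
$$\Delta Y_t = \Delta\xi + \int_t^T \big(f(s,Y_s,Z_s)-f'(s,Y'_s,Z'_s)\big)\,ds + (\Delta K_T - \Delta K_t) + \int_t^T \big(g(s,Y_s,Z_s)-g(s,Y'_s,Z'_s)\big)\,dB_s - \int_t^T \Delta Z_s\,dW_s,$$
splitting the driver difference as $f(s,Y_s,Z_s)-f'(s,Y'_s,Z'_s) = \Delta f(s,Y_s,Z_s) + \big(f'(s,Y_s,Z_s)-f'(s,Y'_s,Z'_s)\big)$, so that $\Delta f(s,Y_s,Z_s)$ is kept as the inhomogeneous source while the bracketed remainder is handled by the Lipschitz hypothesis (H2). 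Applying It\^o to $e^{\beta t}|\Delta Y_t|^2$ and using Young's inequality with parameter $\epsilon$ on the drift cross term produces, after taking expectations, the familiar inequality involving the delay integrals $\int_{-T}^0|\Delta Y(s+u)|^2\gamma(du)$ and $\int_{-T}^0|\Delta Z(s+u)|^2\gamma(du)$; these are linearized by the Fubini change of order of integration already carried out above, converting them into $\widetilde{\gamma}\int_0^T e^{\beta r}|\Delta Y_r|^2\,dr$ and $\widetilde{\gamma}\int_0^T e^{\beta r}|\Delta Z_r|^2\,dr$.

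The heart of the argument is the reflection term. Writing $\Delta Y_s = (Y_s - S_s) - (Y'_s - S'_s) + \Delta S_s$ and invoking the Skorokhod conditions $\int_0^T e^{\beta s}(Y_s - S_s)\,dK_s = 0$ and $\int_0^T e^{\beta s}(Y'_s-S'_s)\,dK'_s=0$ together with $Y_s\geq S_s$, $Y'_s\geq S'_s$ and the nonnegativity of $dK,dK'$, the two surviving cross terms are of negative sign and may be discarded, leaving
$$\int_t^T e^{\beta s}\Delta Y_s\,d\Delta K_s \leq \int_t^T e^{\beta s}\Delta S_s\,d\Delta K_s.$$
Bounding the right-hand side in absolute value through $d|\Delta K|\leq dK + dK'$ by $\big(\sup_s e^{\beta s}|\Delta S_s|^2\big)^{1/2}\int_t^T e^{\beta s/2}\,d(K_s+K'_s)$, and controlling the total variation of $K$ and $K'$ by the a priori estimate of the preceding Proposition applied to each of the two solutions (which is exactly what contributes the factor $\psi^{1/2}_T$), I obtain the mixed term $D\big[E\sup_{0\leq t\leq T} e^{\beta t}|\Delta S_t|^2\big]^{1/2}\psi^{1/2}_T$ on the right-hand side.

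Finally I would collect terms: the conditions $D_1 = \beta-\epsilon>0$ and $D_2 = 1-\widetilde{\gamma}(\frac{2L}{\epsilon}+2\alpha)>0$ permit absorbing the $\int e^{\beta s}|\Delta Y_s|^2\,ds$ and $\int e^{\beta s}|\Delta Z_s|^2\,ds$ contributions into the left-hand side, while the Burkholder--Davis--Gundy inequality applied to $\sup_t|\int_t^T e^{\beta s}\Delta Y_s\Delta Z_s\,dW_s|$ and $\sup_t|\int_t^T e^{\beta s}\Delta Y_s\big(g(s,Y_s,Z_s)-g(s,Y'_s,Z'_s)\big)\,dB_s|$ --- each split by Young's inequality into a small multiple of $E\sup_t e^{\beta t}|\Delta Y_t|^2$ plus a manageable remainder --- controls the supremum norm, just as in the proof above. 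The step I expect to be the main obstacle is the reflection estimate: one must ensure that the variation bound for $\Delta K$ depends only on $\psi$ and not circularly on $\Delta Y,\Delta Z$, so that the Cauchy--Schwarz split genuinely yields the product structure $[\,\cdots\,]^{1/2}\psi^{1/2}_T$ rather than a term needing reabsorption, and one must keep track of the delay measure $\gamma$ through $\widetilde{\gamma}$ so that $D_2>0$ survives after all the absorptions.
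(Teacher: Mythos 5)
Your proposal follows essentially the same route as the paper: It\^o's formula applied to $e^{\beta t}|\Delta Y_t|^2$ with the driver split into $\Delta f(s,Y_s,Z_s)$ plus a Lipschitz remainder, the Fubini linearization of the delay integrals via $\widetilde{\gamma}$, the reflection term reduced through the Skorokhod conditions to $\int e^{\beta s}\Delta S_s\,d\Delta K_s$ and then bounded by Cauchy--Schwarz with $E\,e^{\beta T}|\Delta K_T|^2\lesssim \psi_T$ from the a priori estimate of the preceding proposition, and finally BDG plus absorption under $D_1>0$, $D_2>0$. This matches the paper's argument step for step, including your correct identification of the non-circular control of $\Delta K$ as the point that produces the mixed $[\cdot]^{1/2}\psi_T^{1/2}$ term.
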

\begin{proof}
Applying the Itô formula to $e^{\beta t}|Y_t|^2$ yields that
\begin{align}
&e^{\beta t}|\Delta Y_t|^2+\int_t^T \beta e^{\beta s}|\Delta Y_s|^2 ds \nonumber\\&
\leq e^{\beta T}|\Delta\xi|^2+2\int_t^T e^{\beta s}(\Delta Y_s) \Delta f(s,Y_s,Z_s)ds+2\int_t^T e^{\beta s}\Delta Y_s d\Delta K_s \nonumber\\&
+2\int_t^T e^{\beta s}\Delta Y_s( g(s,Y_s,Z_s)-g(s,Y'_s,Z'_s))dB_s-2\int_t^T e^{\beta s}\Delta Y_s \Delta Z_s dW_s\nonumber\\&
+\int_t^T e^{\beta s}|g(s,Y_s,Z_s)-g(s,Y'_s,Z'_s)|^2ds-\int_t^T e^{\beta s}|\Delta Z_s|^2ds\nonumber\\&
+\epsilon\int_t^T e^{\beta s}|\Delta Y_s|^2ds+\frac{1}{\epsilon}\int_t^Te^{\beta s} |f'(s,Y_s,Z_s)-f'(s,Y'_s,Z'_s)|^2ds \label{eq:3.11}
\end{align}
where $\epsilon>0$, now we use the relation $\int_t^Te^{\beta s }(\Delta Y_s-\Delta S_s)d\Delta K_s\leq 0$ and (5) and taking expectation, we have
\begin{align*}
&(\beta-\epsilon)E\int_t^T  e^{\beta s}|\Delta Y_s|^2 ds+(1-\widetilde{\gamma}(\frac{2L}{\epsilon}+2\alpha))E\int_t^T e^{\beta s}|\Delta Z_s|^2ds \nonumber\\&
\leq Ee^{\beta T}|\Delta \xi|^2+\widetilde{\gamma}(\frac{2L}{\epsilon}+2L)TE(\sup_{0\leq t\leq T}e^{\beta t}|\Delta Y_{t}|^2)\nonumber\\&+2E\int_t^T e^{\beta s}|\Delta S_s| dK_s \nonumber\\&+2E\int_t^T e^{\beta s}(\Delta Y_s) \Delta f(s,Y_s,Z_s)ds
\end{align*}
with the Holder inequality
\begin{align*}
E\int_0^Te^{\beta s}|\Delta S_s|d\Delta K_s\leq \bigg[E\big(\sup_{0\leq t\leq T}e^{\beta t}|\Delta S_t|^2\big)\bigg]^{\frac{1}{2}}\bigg[E\big(e^{\beta T}|\Delta K_T|^2\big)\bigg]^{\frac{1}{2}}
\end{align*}
and since $E|\Delta K_T|^2\leq C\big(E|K_T|^2+E|K'_T|^2\big)$, using inequalities (\ref{eq:3.7}) and proposition 2, we deduce that
\begin{align*}
Ee^{\beta T}|\Delta K_T|^2\leq C_5\psi_T,
\end{align*}
where $C_5>0$ depends on $\beta, \epsilon, \widetilde{\gamma}, \alpha, L$ and $T$. Therefore, we obtain
\begin{align}
&E\int_t^T e^{\beta s}|\Delta Z_s|^2ds
\leq D_{2}^{-1} E e^{\beta T}|\Delta\xi|^2+2D_{2}^{-1}E\int_0^T e^{\beta s}|\Delta f(s,Y_s,Z_s)\Delta Y_s|ds\nonumber\\&
+\frac{T}{\frac{1}{\widetilde{\gamma}(\frac{2L}{\epsilon}+2L)}-1}E\big(\sup_{0\leq t\leq T}e^{\beta t}|\Delta Y_{t}|^2\big)+C_6\bigg[E(\sup_{0\leq t\leq T}e^{\beta t}(S_t^+)^2)\bigg]^{\frac{1}{2}}(\psi_T)^\frac{1}{2}\label{eq:3.12}
\end{align}
where $C_6>0$ depends on $\beta, \epsilon, \widetilde{\gamma}, \alpha, L$ and $T$. But
\begin{align*}
2\int_0^T e^{\beta s}|\Delta f(s,Y_s,Z_s)\Delta Y_s|ds&\leq 2\sup_{0\leq t\leq T}e^{\frac{\beta t}{2}}|\Delta Y_{t}|\int_0^Te^{\frac{\beta s}{2}}|\Delta f(s,Y_s,Z_s)|ds\\&\leq\varepsilon \sup_{0\leq t\leq T}e^{\beta t}|\Delta Y_{t}|^2+\frac{1}{\varepsilon}\int_0^Te^{\beta s}|\Delta f(s,Y_s,Z_s)|^2ds,
\end{align*}
where $\varepsilon>0$. With (\ref{eq:3.11}), the Burkholder Davis Gundy inequality and the two previous inequalities, we get
\begin{align}
&(1-\widetilde{\gamma}T(\frac{L}{\epsilon}+L))E\big[\sup_{0\leq t\leq T}e^{\beta t}|\Delta Y_t|^2\big]\nonumber\\&\leq
E\bigg[e^{\beta T}|\Delta \xi|^2+\varepsilon \sup_{0\leq t\leq T}e^{\beta t}|\Delta Y_{t}|^2+\frac{1}{\varepsilon}\int_0^Te^{\beta s}|\Delta f(s,Y_s,Z_s)|^2ds\bigg]\nonumber\\&+C_6\bigg[E(\sup_{0\leq t\leq T}e^{\beta t}(S_t^+)^2)\bigg]^{\frac{1}{2}}(\psi_T)^\frac{1}{2}\nonumber\\&+E\bigg[\widetilde{\gamma}(\frac{L}{\epsilon}
+\alpha)\int_0^Te^{\beta s}|\Delta Z_s|^2ds
+2c\rho\sup_{0\leq t\leq T}|\Delta Y_s|^2+\frac{c}{\rho}\int_t^Te^{\beta s}|\Delta Z_s|^2ds\nonumber\\&+\frac{c}{\rho}|\int_t^Te^{\beta s}(|g(s,0,0)|^2+L|\Delta Y_s|^2+\alpha|\Delta Z_s|^2)\bigg]\label{eq:3.13}
\end{align}
where $c,\ \rho$ are constants.

By (\ref{eq:3.12}) and (\ref{eq:3.13}), we have

\begin{align*}
&(1-\widetilde{\gamma}T(\frac{L}{\epsilon}+L)-(\frac{T}{\frac{1}{\widetilde{\gamma}
(\frac{2L}{\epsilon}+2L)-1}}+D_{2}^{-1}\varepsilon)
(\widetilde{\gamma}\frac{L}{\epsilon}+L+\frac{c}{\rho})-\varepsilon-c\rho)
E\big[\sup_{0\leq t\leq T}e^{\beta t}|\Delta Y_t|^2\big]\\&
\\&\leq
(1+D_{2}^{-1}(\widetilde{\gamma}(\frac{L}{\epsilon}
+\alpha)+\frac{c}{\rho}+\frac{\alpha c}{\rho}))E[e^{\beta T}|\Delta \xi|^2]\\&+[1+D_{2}^{-1}(\widetilde{\gamma}(\frac{L}{\epsilon}
+\alpha)+\frac{c}{\rho})]\frac{1}{\varepsilon}E\int_0^Te^{\beta s}|\Delta f(s,Y_s,Z_s)|^2ds\\&+C_6[1+\widetilde{\gamma}(\frac{L}{\epsilon}
+\alpha)+\frac{c}{\rho}+\frac{\alpha c}{\rho}]\big[E(\sup_{0\leq t\leq T}e^{\beta t}(S_t^+)^2)\big]^{\frac{1}{2}}(\psi_T)^\frac{1}{2}\\&+\frac{c}{\rho}E\int_t^Te^{\beta s}|g(s,0,0)|^2ds
\end{align*}

Finally, it is enough to choose $\rho=\frac{1}{2c}$ and
$\varepsilon$ small in the above inequalities (\ref{eq:3.12}) and
(\ref{eq:3.13}) to obtain (\ref{eq:3.10}). The proposition is proved
\end{proof}

We deduce immediately the following uniqueness result from proposition 3 with $\xi'=\xi,\ f=f'$ and $S=S'$.
\begin{corollary}
Under assumptions $H1)-H5)$ , if the Lipschitz constant L of the
generator f is small enough and for two positive constants â and ã
the conditions of Proposition 3 are satisfied, then there exists at
most one solution of the RBSDE with time delayed generators
$i)-iv).$
\end{corollary}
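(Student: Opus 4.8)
The plan is to specialize Proposition 3 to the case where the two sets of data coincide. Concretely, I would take $\xi'=\xi$, $f'=f$ (the generator $g$ is already common to both triplets in the hypotheses of Proposition 3) and $S'=S$, and let $(Y,Z,K)$ and $(Y',Z',K')$ be two arbitrary solutions of the \emph{same} RBDSDE with time delayed generators satisfying i)--iv). With this choice the differences introduced in Proposition 3 become $\Delta\xi=0$, $\Delta f=0$ and $\Delta S=0$, so that every term on the right-hand side of the stability estimate $(\ref{eq:3.10})$ vanishes identically.

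First I would invoke $(\ref{eq:3.10})$ directly. Since the right-hand side is zero, I obtain
\begin{align*}
E\Big(\sup_{0\leq t\leq T} e^{\beta t}|\Delta Y_t|^2 + \int_0^T e^{\beta t}|\Delta Z_t|^2\,dt\Big)\leq 0.
\end{align*}
Because the left-hand side is a sum of nonnegative quantities, each must vanish separately: $E\big(\sup_{0\leq t\leq T} e^{\beta t}|\Delta Y_t|^2\big)=0$ and $E\int_0^T e^{\beta t}|\Delta Z_t|^2\,dt=0$. As $\beta>0$ and $Y,Y'\in S_T^2$ are continuous, the first identity forces $Y_t=Y'_t$ for all $t\in[0,T]$, $P$-a.s., while the second, together with $e^{\beta t}>0$, gives $Z_t=Z'_t$ for $dt\times dP$-almost every $(t,\omega)$.

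It then remains to recover the uniqueness of the increasing process $K$, which is the single point not covered directly by $(\ref{eq:3.10})$. For this I would return to the defining relation ii) and solve it for the increment of $K$:
\begin{align*}
K_T-K_t = Y_t-\xi-\int_t^T f(s,Y_s,Z_s)\,ds-\int_t^T g(s,Y_s,Z_s)\,dB_s+\int_t^T Z_s\,dW_s,
\end{align*}
with the analogous expression for $K'_T-K'_t$ in terms of $Y',Z'$. Since $\xi'=\xi$, $f'=f$, $g$ is common, and $Y=Y'$, $Z=Z'$ almost surely, the two right-hand sides coincide, whence $K_T-K_t=K'_T-K'_t$ for every $t$. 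Taking $t=0$ and using $K_0=K'_0=0$ yields $K_T=K'_T$, and therefore $K_t=K'_t$ for all $t\in[0,T]$. This shows that any two solutions agree, i.e. at most one solution exists.

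Since the whole argument reduces to a single substitution into Proposition 3 followed by an elementary nonnegativity argument and a rereading of the equation for $K$, I do not expect any genuine obstacle. The only mild care required is the separate treatment of $K$: the stability estimate controls only the $(Y,Z)$-components of the difference and says nothing about $\Delta K$ on its own, so uniqueness of $K$ has to be extracted from the structural equation ii) after the equality of $Y$ and $Z$ is in hand.
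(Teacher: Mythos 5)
Your proof is correct and follows essentially the same route as the paper, which simply deduces the corollary from Proposition 3 by taking $\xi'=\xi$, $f=f'$, $S=S'$. Your additional step recovering $K=K'$ from equation ii) once $Y=Y'$ and $Z=Z'$ are established is a worthwhile detail that the paper leaves implicit.
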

\section{Existence and uniqueness of the solution}
To begin with, let us first assume that $f$ does not depend on $(y,
z)$, that is, it is a given $\mathfrak{F}_t$-progressively
measurable process satisfying that

(H2' ) $E\int^T_0 |f (t)|^2dt <\infty$ and $E\int^T_0 |g (t)|^2dt <\infty$.

 A solution to the backward reflection problem is a triple $(Y, Z, K)$ which
satisfies (i), (iii), (iv) and

(ii') $Y(t) = \xi + \int^T_t f (s)ds + K(T)-K(t)+\int^T_tg(s)dB_s-\int^T_t Z(s)dW(s), 0\leq t\leq T .$

The following proposition is from Bahlali et al. (2009) .
\begin{proposition}
Under assumptions (H1), (H2' ) and (H5), the Backward Reflected Problem  (i), (ii'), (iii),
(iv) has a unique solution ${(Y_t, Z_t, K_t); 0\leq t\leq T }.$
\end{proposition}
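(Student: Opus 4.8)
The driver here does not depend on the unknowns, so the equation is a genuinely linear reflected BDSDE, and the natural route is penalization, exactly as in Bahlali et al. \cite{bhmm}. For each integer $n\ge 1$ I would introduce the (non-reflected) penalized BDSDE
\begin{equation*}
Y_t^n=\xi+\int_t^T f(s)\,ds+n\int_t^T (Y_s^n-S_s)^-\,ds+\int_t^T g(s)\,dB_s-\int_t^T Z_s^n\,dW_s ,
\end{equation*}
whose driver $(s,y)\mapsto f(s)+n(y-S_s)^-$ is globally Lipschitz in $y$ and satisfies (H2'); hence by the Pardoux--Peng theory \cite{pp} each penalized equation has a unique solution $(Y^n,Z^n)\in S_T^2\times M_T^2$. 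Setting $K_t^n:=n\int_0^t (Y_s^n-S_s)^-\,ds$ yields a continuous, nondecreasing, adapted process with $K_0^n=0$, so each triple $(Y^n,Z^n,K^n)$ solves an equation of the form (ii') in which the obstacle and Skorokhod conditions (iii)--(iv) hold only approximately.

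The core of the argument is to control the penalization uniformly in $n$ and then pass to the limit. First I would apply the It\^o formula to $e^{\beta t}|Y_t^n|^2$ and argue exactly as in Proposition 2 to obtain bounds on $E\sup_{t}e^{\beta t}|Y_t^n|^2$, $E\int_0^T e^{\beta s}|Z_s^n|^2\,ds$ and $E\,e^{\beta T}|K_T^n|^2$ that are independent of $n$; the assumption $S_T\le \xi$ from (H6), together with (H2') and (H5), is what keeps these bounds finite. Next, since the penalized drivers are ordered ($n(y-S)^-$ is nondecreasing in $n$), the comparison theorem for BDSDEs \cite{shi} gives $Y_t^n\le Y_t^{n+1}$ for all $t$, a.s. Monotone convergence then produces a limit $Y_t=\lim_n Y_t^n$, and the uniform estimates upgrade this to convergence of $Y^n$ in $S_T^2$, of $Z^n$ in $M_T^2$, and of $K^n$ toward some continuous, nondecreasing, adapted $K$.

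The delicate points, and where I expect the real work, are the reflection conditions for the limit. To get $Y_t\ge S_t$ one extracts from the uniform bound on $K^n$ the estimate $E\int_0^T e^{\beta s}\big((Y_s^n-S_s)^-\big)^2\,ds\le C/n\to 0$, forcing $(Y-S)^-=0$; passing to the limit in the penalized equation then identifies $(Y,Z,K)$ as a solution of (ii'). The genuinely hard step is the flat-off (Skorokhod) condition $\int_0^T e^{\beta t}(Y_t-S_t)\,dK_t=0$. The plan is to start from the identity $\int_0^T (Y_s^n-S_s)\,dK_s^n=-n\int_0^T\big((Y_s^n-S_s)^-\big)^2\,ds\le 0$, establish uniform convergence of $Y^n-S$ and weak convergence of the measures $dK^n$ to $dK$, and use lower semicontinuity to deduce $\int_0^T e^{\beta t}(Y_t-S_t)\,dK_t\le 0$; since $Y-S\ge 0$ and $K$ is nondecreasing the integral is also $\ge 0$, hence equals $0$. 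This is the standard but technical passage to the limit in reflected doubly stochastic equations, and it is the main obstacle.

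Uniqueness is comparatively easy and does not need the penalization. Given two solutions $(Y,Z,K)$ and $(Y',Z',K')$ with the same data, I would apply the It\^o formula to $e^{\beta t}|Y_t-Y_t'|^2$; since $f$ and $g$ are common and independent of the unknowns, all driver differences cancel, and the only remaining cross term is $2\int_t^T e^{\beta s}(Y_s-Y_s')\,d(K_s-K_s')$, which is $\le 0$ because each Skorokhod condition gives $\int (Y-Y')\,dK\le 0$ and $\int (Y'-Y)\,dK'\le 0$. Taking expectations and choosing $\beta$ large enough then forces $E\int_0^T e^{\beta s}|Z_s-Z_s'|^2\,ds=0$ and $E\sup_{t}e^{\beta t}|Y_t-Y_t'|^2=0$, so $Y=Y'$ and $Z=Z'$, and then $K=K'$ follows from (ii'). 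This completes the plan.
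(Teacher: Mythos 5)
The paper offers no proof of this proposition at all: it is simply imported from Bahlali et al.\ (2009) \cite{bhmm} with the sentence ``The following proposition is from Bahlali et al.\ (2009)'', so there is no internal argument to compare yours against. Your penalization scheme is the standard construction that underlies the cited result (and its RBSDE ancestor), and the overall architecture --- Lipschitz penalized BDSDEs solvable by Pardoux--Peng \cite{pp}, uniform a priori bounds, monotonicity via the comparison theorem of \cite{shi}, the estimate $E\int_0^T((Y^n_s-S_s)^-)^2\,ds\le C/n$ to force $Y\ge S$, weak convergence of $dK^n$ plus lower semicontinuity for the Skorokhod condition, and the elementary It\^o/monotonicity argument for uniqueness --- is correct and is essentially how the quoted reference proceeds; what your write-up buys over the paper is an actual self-contained proof. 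Two remarks. First, the proposition's hypothesis list omits (H6), but you are right that the obstacle assumptions (continuity, $E\sup_t e^{\beta t}(S_t^+)^2<\infty$ and $S_T\le\xi$) are indispensable and must be invoked, as you do. Second, the one place where your sketch is lighter than the real argument is the claim that ``the uniform estimates upgrade this to convergence of $Z^n$ in $M_T^2$'': uniform bounds give only weak compactness, and strong $M^2$-convergence of $(Z^n)$ requires a Cauchy estimate obtained by applying It\^o to $|Y^n_t-Y^m_t|^2$, whose cross term $E\int_0^T(Y^n_s-Y^m_s)\,d(K^n_s-K^m_s)$ is controlled only after proving the uniform-in-time statement $E\sup_{0\le t\le T}\bigl((Y^n_t-S_t)^-\bigr)^2\to0$ (via a Dini/section-theorem argument), not merely the $dt\otimes dP$ bound $C/n$; this is the same delicate point you correctly flag for the flat-off condition, and it should be treated once and reused for both purposes.
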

We now deal with the general case of generator i.e. f depends on (y,
z).
\begin{theorem}
Assume assumptions H1)-H5) hold. If the Lipschitz constant $L$ of
the generator $f$ and $g$ is small enough and for two positive constants
$\beta$ and $\epsilon$ the conditions of Proposition 3 are satisfied, then
the RBDSDE with time delayed generator (i)-(iv) has a unique solution
$(Y_t, Z_t, K_t); 0\leq t\leq T.$
\end{theorem}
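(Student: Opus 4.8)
The plan is to get uniqueness for free from the stability estimate already in hand, and to build existence by a Picard iteration whose individual steps are solved by the non-delayed reflected problem of Proposition 5. Uniqueness is immediate: applying Corollary 4 (equivalently Proposition 3 with $\xi'=\xi$, $f'=f$, $S'=S$) to any two solutions forces $\Delta Y\equiv 0$ and $\Delta Z\equiv 0$ in the weighted norms, and then $\Delta K\equiv 0$ follows from equation (ii). So the real work is existence.

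For existence I would iterate as follows. Set $(Y^0,Z^0)=(0,0)\in S_T^2\times M_T^2$, and given $(Y^n,Z^n)$ freeze the coefficients by putting $f^n(s):=f(s,Y^n_s,Z^n_s)$ and $g^n(s):=g(s,Y^n_s,Z^n_s)$. The integrability bounds (\ref{eq:f}) and (\ref{eq:g}) show that $f^n,g^n$ satisfy hypothesis (H2'), so the Backward Reflected Problem with data $(\xi,f^n,g^n,S)$ has, by Proposition 5, a unique solution $(Y^{n+1},Z^{n+1},K^{n+1})$. Since $(\mathcal{F}_t)$ is not a filtration, I would first check that the frozen coefficients are genuinely $\mathcal{F}_t$-progressively measurable, being compositions of $f,g$ with the previous iterate, so that Proposition 5 applies at each step and the limit we produce is again $\mathcal{F}_t$-measurable.

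The core step is to prove that the map $(Y^n,Z^n)\mapsto(Y^{n+1},Z^{n+1})$ is a contraction in $S_T^2\times M_T^2$. I would apply the It\^o formula to $e^{\beta t}|Y^{n+1}_t-Y^n_t|^2$, exactly as in the proofs of Proposition 2 and Proposition 3. Two features make the estimate close. First, both iterates are reflected on the same barrier $S$, so the cross term $\int_t^T e^{\beta s}(Y^{n+1}_s-Y^n_s)\,d(K^{n+1}_s-K^n_s)$ is nonpositive by the two Skorokhod conditions in (iv). Second, the delay integrals produced by the Lipschitz bounds (H2) are turned into ordinary integrals by the change-of-integration-order computation already used in Proposition 2, generating the factor $\widetilde{\gamma}$. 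Combining the $f$-contribution (constant $L$, absorbed through a parameter $\theta$), the $g$-contribution (constants $L$ and $\alpha$), and controlling the $dB$ and $dW$ martingales by Burkholder--Davis--Gundy, I expect
\begin{align*}
\big\|(Y^{n+1}-Y^n,\,Z^{n+1}-Z^n)\big\|^2 \le \kappa\,\big\|(Y^{n}-Y^{n-1},\,Z^{n}-Z^{n-1})\big\|^2,
\end{align*}
with $\kappa=\kappa(\beta,\theta,\widetilde{\gamma},L,\alpha,T)$ assembled from the same quantities $D_1=\beta-\theta$ and $D_2=1-2\widetilde{\gamma}(\tfrac{L}{\theta}+\alpha)$ that appear in Propositions 2 and 3.

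The main obstacle is forcing $\kappa<1$. This is precisely where the smallness of $L$ and the balancing of $\beta,\theta$ (so that simultaneously $D_1>0$, $D_2>0$, and the coefficient of $\sup_{0\le t\le T}e^{\beta t}|\Delta Y_t|^2$ stays below $1$) are essential; the delay coupling mixes the $Y$- and $Z$-contributions through $\gamma$, so the estimate does not decouple and $L$ must be kept genuinely small. Once $\kappa<1$, the sequence $(Y^n,Z^n)$ is Cauchy and converges to some $(Y,Z)\in S_T^2\times M_T^2$; rewriting $K^{n+1}_T-K^{n+1}_t$ from (ii') shows $(K^n)$ converges as well, with limit $K$. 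Finally I would pass to the limit in (ii'), use the uniform convergence $Y^n\to Y$ to preserve $Y\ge S$ in (iii), and invoke the closedness of the Skorokhod condition to recover $\int_0^T e^{\beta t}(Y_t-S_t)\,dK_t=0$ in (iv), so that $(Y,Z,K)$ is the desired solution; uniqueness from the first paragraph completes the argument.
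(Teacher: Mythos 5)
Your proposal is correct and follows essentially the same route as the paper: the paper also freezes the coefficients at a given pair $(U,V)$, solves the resulting non-delayed reflected problem by Proposition 5, and shows the induced map $\Theta$ is a strict contraction on $S^2\times M^2$ in the $\beta$-weighted norm (using the Skorokhod condition to kill the $dK$ cross term, the change-of-integration-order to produce $\widetilde{\gamma}$, Burkholder--Davis--Gundy for the martingales, and smallness of $L$ to get the contraction constant below $1$). Your Picard-iteration phrasing and the paper's Banach fixed-point phrasing are the same argument.
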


\begin{proof}
For any $\left(Y_t,Z_t,K_t\right); 0\leq t\leq T $ which satisfy (i)
and (iii). Given $(U,V)\in S^{2}\times M^{2}~$, let
\begin{eqnarray*}
&&Y_{t} =\xi+\int_{t}^{t}f\left( s,U_{s},V_{s}\right)ds+\int_{t}^{t}g\left( s,U_{s},V_{s}\right) dB_{s}-\int_{t}^{t}Z_{s}dW_{s}, \\
&&Y_t\geq S_t,\\&& \int_0^Te^{\beta t}(Y_t-S_t)dK_t=0,\ \ \texttt{for some}\ \beta>0.
\end{eqnarray*}%
the solution of this equation exists and is unique by proposition 5.
Hence, if we define $\Theta (U,V)=(Y,Z)$, then $\Theta $ maps
$S^{2}\times M^{2}~$ to itself. We show now that $\Theta $ is
contractive. To this end, take any $\left( U^{i},V^{i}\right) \in
S^{2}\times M^{2}$ and $\left( Y^{i},Z^{i}\right) \in S^{2}\times
M^{2},\ \ (i=1,2)$, and let $\Theta (U^{i},V^{i})=(Y^{i},Z^{i}).$ We
denote \ $\left( \overline{Y},\overline{Z}, \overline{K}\right)
=\left( Y^{1}-Y^{2},Z^{1}-Z^{2},K^{1}-K^{2}\right) $ and $\left(
\overline{U},\overline{V},\right) =\left(
U^{1}-U^{2},V^{1}-V^{2}\right) $. Therefore, It\^{o}'s formula
applied to $ \left\vert \overline{Y}\right\vert ^{2}e^{\beta t}$ and
the inequality $2ab\leq \left( \frac{1}{\epsilon }\right)
a^{2}+\epsilon b^{2}$, lead to
\begin{align*}
&e^{\beta t}|\overline{Y}_t|^2+\int_t^T \beta e^{\beta s}|\overline{Y}_s|^2 ds \nonumber\\&
\leq
\epsilon\int_t^T e^{\beta s}|\overline{Y}_s|^2ds+\widetilde{\gamma}(\frac{L}{\epsilon}+2L)T\sup_{0\leq t\leq T} e^{\beta t}|\overline{U}_s|^2+\widetilde{\gamma}(\frac{L}{\epsilon}+2\alpha)\int_t^Te^{\beta s}|\overline{V}_s|^2ds\\&+2\int_t^T e^{\beta s}\overline{Y}_s d\overline{K}_s
+2\int_t^T e^{\beta s}\overline{Y}_s [g(s,U_s,V_s)-g(s,U'_s,V'_s)]dB_s-2\int_t^T e^{\beta s}\overline{Y}_s \overline{Z}_s dW_s\nonumber\\&
-\int_t^T e^{\beta s}|\overline{Z}_s|^2ds
\end{align*}
where $\epsilon>0$, and from $2\int_t^T e^{\beta s}\overline{Y}_s d\overline{K}_s\leq0$, we obtain
\begin{align}
&e^{\beta t}|\overline{Y}_t|^2+\int_t^T \beta e^{\beta s}|\overline{Y}_s|^2 ds+\int_t^T e^{\beta s}|\overline{Z}_s|^2ds \nonumber \\&\leq
\epsilon\int_t^T e^{\beta s}|\overline{Y}_s|^2ds+\widetilde{\gamma}(\frac{L}{\epsilon}+2L)T\sup_{0\leq t\leq T} e^{\beta t}|\overline{U}_s|^2+\widetilde{\gamma}(\frac{L}{\epsilon}+2\alpha)\int_t^Te^{\beta s}|\overline{V}_s|^2ds \nonumber\\&
+2\int_t^T e^{\beta s}\overline{Y}_s [g(s,U_s,V_s)-g(s,U'_s,V'_s)]dB_s-2\int_t^T e^{\beta s}\overline{Y}_s \overline{Z}_s dW_s\label{eq3.14}
\end{align}
taking the expectation we have
\begin{align*}
&e^{\beta t}E|\overline{Y}_t|^2+(\beta-\epsilon)E\int_t^T e^{\beta s}|\overline{Y}_s|^2 ds+E\int_t^T e^{\beta s}|\overline{Z}_s|^2ds \\&\leq
\widetilde{\gamma}(\frac{L}{\epsilon}+2L)TE\sup_{0\leq t\leq T} e^{\beta t}|\overline{U}_s|^2+\widetilde{\gamma}(\frac{L}{\epsilon}+2\alpha)E\int_t^Te^{\beta s}|\overline{V}_s|^2ds
\end{align*}
By choosing $\beta-\epsilon>0$, we get
\begin{align}
E\int_t^T e^{\beta s}|\overline{Z}_s|^2ds \leq
\widetilde{\gamma}(\frac{L}{\epsilon}+2L+2\alpha)\max(T,1)E\bigg[\sup_{0\leq t\leq T} e^{\beta t}|\overline{U}_s|^2+\int_t^Te^{\beta s}E|\overline{V}_s|^2ds\bigg]\label{eq3.15}
\end{align}
Taking supremum in (\ref{eq3.14}), and the expectation, we get
\begin{align}
&E[\sup_{0\leq t\leq T}e^{\beta t}|\overline{Y}_t|^2] \nonumber \\&\leq
\widetilde{\gamma}(\frac{L}{\epsilon}+2L+2\alpha)\max(1,T)\bigg[\sup_{0\leq t\leq T} e^{\beta t}|\overline{U}_s|^2+\int_t^Te^{\beta s}|\overline{V}_s|^2ds \bigg]\nonumber\\&
+2E\big[\sup_{0\leq t\leq T}|\int_t^T e^{\beta s}\overline{Y}_s [g(s,U_s,V_s)-g(s,U'_s,V'_s)]dB_s+\int_t^T e^{\beta s}\overline{Y}_s \overline{Z}_s dW_s|\big]\label{eq3.16}
\end{align}
By the Burkholder Davis Gundy inequality, there exist real number
$\widehat{C}$, such that
\begin{align}
&E[\sup_{0\leq t\leq T}e^{\beta t}|\overline{Y}_t|^2] \nonumber
\\&\leq
\widetilde{\gamma}(\frac{L}{\epsilon}+2L+2\alpha)\max(1,T)\bigg[\sup_{0\leq
t\leq T} e^{\beta t}|\overline{U}_s|^2+\int_t^Te^{\beta
s}|\overline{V}_s|^2ds \bigg]\nonumber\\&
+\frac{1}{2}E\big[\sup_{0\leq t\leq T}| e^{\beta
t}|\overline{Y}_t|^2\bigg] +\widehat{C}E\int_0^T e^{\beta
s}|\overline{Z}_s|^2 ds\label{eq3.17}
\end{align}
Plugging now the inequality in (\ref{eq3.15}) and (\ref{eq3.17}), we obtain
\begin{align*}
&E[\sup_{0\leq t\leq T}e^{\beta t}|\overline{Y}_t|^2]+E\int_0^T
e^{\beta s} |\overline{Z}_s|^2 ds \nonumber \\&\leq
\widetilde{\gamma}(\frac{L}{\epsilon}+2L+2\alpha)\max(1,T)(2\widehat{C}+3)E\bigg[\sup_{0\leq
t\leq T} e^{\beta t}|\overline{U}_s|^2+\int_t^Te^{\beta
s}|\overline{V}_s|^2ds \bigg]
\end{align*}
with good choice of $\epsilon$, we deduce that
\begin{align*}
E[\sup_{0\leq t\leq T}e^{\beta t}|\overline{Y}_t|^2+\int_0^T e^{\beta s}|\overline{Z}_s|^2 ds] \nonumber \leq
\frac{1}{2}E\bigg[\sup_{0\leq t\leq T} e^{\beta t}|\overline{U}_s|^2+\int_0^Te^{\beta s}|\overline{V}_s|^2ds \bigg]
\end{align*}

Consequently the mapping $\Theta $ is a strict contraction on $S^{2}\times
M^{2}$ equipped with the norm
\begin{equation*}
\left\Vert \left( Y,Z\right) \right\Vert _{\beta }=\left(
E\int_{t}^{T}e^{\beta s}\left( \left\vert \overline{Y}_{s}\right\vert
^{2}+\left\vert \overline{Z}_{s}\right\vert ^{2}\right) ds\right) ^{\frac{1}{
2}}.
\end{equation*}%
Moreover, it has a unique fixed point, which is the unique solution of the
RBDSDE with delayed time generator $\left( \xi ,f,g,S\right) .$

\end{proof}

\end{document}